\newtheorem{theorem}{Theorem}
\newtheorem{lemma}[theorem]{Lemma}
\newtheorem{proposition}[theorem]{Proposition}
\newcommand{\non}{\nonumber}
\newcommand{\beq}{\begin{equation}}
\newcommand{\eeq}{\end{equation}}
\newcommand{\bseq}{\begin{subequations}}
\newcommand{\eseq}{\end{subequations}}
\newcommand{\beqn}{\begin{eqnarray}}
\newcommand{\eeqn}{\end{eqnarray}}
\newcommand{\ba}{\begin{array}}
\newcommand{\ea}{\end{array}}
\newcommand{\bct}{\begin{center}}
\newcommand{\ect}{\end{center}}
\newcommand{\btmz}{\begin{itemize}}
\newcommand{\etmz}{\end{itemize}}
\newcommand{\benum}{\begin{enumerate}}
\newcommand{\eenum}{\end{enumerate}}
\newcommand{\matbegin}{
        \left[
}
\newcommand{\matend}{
        \right]
}
\newcommand{\tbo}[2]{
  \matbegin \begin{array}{c}
       #1 \\ #2
       \end{array} \matend }
\newcommand{\tbt}[4]{
  \matbegin \begin{array}{cc}
       #1 & #2 \\ #3 & #4
       \end{array} \matend }
\newcommand{\be}{\begin{equation}}
\newcommand{\ee}{\end{equation}}
\newcommand{\cplxs}{ C\kern -.35em \rule{0.03 em}{.7 ex}~   }
\def\complex{\hbox{C\kern -.45em \rule{0.03 em}{1.5 ex}}~}
\newcommand{\bi}{\begin{itemize}}
\newcommand{\ei}{\end{itemize}}
\newtheorem{assumption}{Assumption}
\newcommand{\vsp}{\vspace*{0.cm}}
\newcommand{\prox}{\mathbf{prox}}
\DeclareMathOperator*{\argmin}{argmin}
\DeclareMathOperator*{\minimize}{minimize}
\DeclareMathOperator*{\subject}{subject~to}
\newcommand{\mre}{\mathrm{e}}
\newcommand{\sign}{\mathrm{sign}}
\newcommand{\DefinedAs}[0]{\mathrel{\mathop:}=}
\begin{document}
%
\title{Global exponential stability of primal-dual gradient flow dynamics based on the proximal augmented Lagrangian: A Lyapunov-based approach}

\author{Dongsheng~Ding,~\IEEEmembership{Student~Member,~IEEE,}
        and Mihailo~R.~Jovanovi\'{c},~\IEEEmembership{Fellow,~IEEE}
\thanks{Financial support from the National Science Foundation under awards ECCS-1708906 and ECCS-1809833 is gratefully acknowledged.}
\thanks{D.\ Ding and M.\ R.\ Jovanovi\'{c} are with the Ming Hsieh Department of Electrical and Computer Engineering, University of Southern California, Los Angeles, CA 90089.
		{E-mails: dongshed@usc.edu, mihailo@usc.edu.}
	    }
        }

\maketitle

	\begin{abstract}
	For a class of nonsmooth composite optimization problems with linear equality constraints, we utilize a Lyapunov-based approach to establish the global exponential stability of the primal-dual gradient flow dynamics based on the proximal augmented Lagrangian. The result holds when the differentiable part of the objective function is strongly convex with a Lipschitz continuous gradient; the non-differentiable part is proper, lower semi-continuous, and convex; and the matrix in the linear constraint is full row rank. Our quadratic Lyapunov function generalizes recent result from strongly convex problems with either affine equality or inequality constraints to a broader class of composite optimization problems with nonsmooth regularizers and it provides a worst-case lower bound of the exponential decay rate. Finally, we use computational experiments to demonstrate that our convergence rate estimate is less conservative than the existing alternatives.
	\end{abstract}

	\vspace*{-1ex}
\section{Introduction}

Primal-dual gradient flow dynamics belong to a class of Lagrangian-based methods for constrained optimization problems. Among other applications, such dynamics have found use in network utility maximization~\cite{feipag10}, resource allocation~\cite{dinjovACC18}, distributed optimization~\cite{wannic11}, and feedback-based online optimization~\cite{coldalber19} problems. Stability conditions for various forms of the gradient flow dynamics have been proposed since their introduction in the 1950's~\cite{arrhuruza58}. 


Lyapunov-based approach has been an effective tool for studying the stability of primal-dual algorithms starting with the seminal paper of Arrow, Hurwicz, and Uzawa~\cite{arrhuruza58}. They utilized a quadratic Lyapunov function to establish the global asymptotic stability of the primal-dual dynamics for strictly convex-concave Lagrangians. This early result was extended to the problems in which the Lagrangian is either strictly convex or strictly concave~\cite{palchi06}. A simplified Lyapunov function was also proposed for linearly-convex or linearly-concave Lagrangians. For a projected variant of the primal-dual dynamics that could account for inequality constraints, a Krasovskii-based Lyapunov function was combined with LaSalle's invariance principle to show the global asymptotic stability in~\cite{feipag10}. The invariance principle was also specialized to discontinuous Carath\'eodory systems and a quadratic Lyapunov function was used to show the global asymptotic stability of projected primal-dual gradient flow dynamics under globally-strict or locally-strong convexity-concavity assumptions~\cite{chemalcor16,chemalcor18}. Additional information about the utility of a Lyapunov-based analysis in optimization can be found in a recent reference~\cite{polshc17}.


In~\cite{dhikhojovTAC19}, the theory of proximal operators was combined with the augmented Lagrangian approach to solve optimization problems in which the objective function can be decomposed into the sum of the strongly convex term with a Lipschitz continuous gradient and a convex non-differentiable term. By evaluating the augmented Lagrangian along a certain manifold, a continuously differentiable function of both primal and dual variables was obtained. This function was named the proximal augmented Lagrangian and the theory of integral quadratic constraints (IQCs) in the frequency domain was employed to prove the global exponential stability of the resulting primal-dual dynamics~\cite{dhikhojovTAC19}. This method yields an evolution model with a continuous right-hand-side even for nonsmooth problems and it avoids the explicit construction of a Lyapunov function. In~\cite{quli19}, a quadratic Lyapunov function was used to prove similar properties for a narrower class of problems that involve strongly convex and smooth objective functions with either affine equality or inequality constraints. More recently, this Lyapunov-based result was extended to account for variations in the constraints~\cite{tanquli19,cheli19} and the theory of IQCs was used to prove global exponential stability of the differential equations that govern the evolution of proximal gradient and Douglas-Rachford splitting flows~\cite{mogjovAUT19}.    

Herein, we utilize a Lyapunov-based approach to establish the global exponential stability of the primal-dual gradient flow dynamics resulting from the proximal augmented Lagrangian. As aforementioned, this method was introduced in~\cite{dhikhojovTAC19} to solve a class of nonsmooth composite optimization problems. When the differentiable part of the objective function is strongly convex with a Lipschitz continuous gradient; the non-differentiable part is proper, lower semi-continuous, and convex; and the matrix in the linear constraint is full row rank, we construct a new quadratic Lyapunov function for the underlying primal-dual dynamics. This Lyapunov function allows us to derive a worst-case lower bound on the exponential decay rate. In contrast to~\cite{feipag10,niecor16,chemalcor16,chemalcor18}, our gradient flow dynamics are projection-free and there are no nonsmooth terms in the Lyapunov function. 

We employ the theory of IQCs in the time domain to obtain a quadratic Lyapunov function that establishes the global exponential stability of the primal-dual gradient flow dynamics resulting from the proximal augmented Lagrangian framework for nonsmooth composite optimization. Our Lyapunov function is more general that the one in~\cite{quli19} and it yields less conservative convergence rate estimates. This extends the results of~\cite{quli19} from strongly convex problems with affine equality or inequality constraints to a broader class of optimization problems with nonsmooth regularizers. 
	
The remainder of the paper is organized as follows. In Section~\ref{sec.pro}, we provide background material, formulate the nonsmooth composite optimization problem, and describe the proximal augmented Lagrangian as well as the resulting primal-dual gradient flow dynamics. In Section~\ref{sec.glob}, we construct a quadratic Lyapunov function for verifying the global exponential stability of the primal-dual dynamics. In Section~\ref{sec.comp}, we use computational experiments to illustrate the utility of our results. In Section~\ref{sec.con}, we close the paper with concluding remarks.

	\vspace*{-1ex}
\section{Problem formulation and background}
\label{sec.pro}

We consider convex composite optimization problems in which the objective function consists of a continuously differentiable term $f$ and a non-differentiable term $g$
\be
\label{eq.opt}
\ba{rl}
\minimize\limits_{x,\,z}
&
f(x) \,+\, g(z)\; 
\\[0.15cm]
\subject 
& 
Tx \, - \, z \, = \, 0
\ea
\ee
where $T\in \mathbb{R}^{m\times n}$ is a matrix that relates the optimization variables $x\in\mathbb{R}^n$ and $z\in\mathbb{R}^m$. 

	\vsp
	
\begin{assumption}
	\label{as.feasible}
Problem~\eqref{eq.opt} is feasible and its minimum is finite. 	
\end{assumption}	
\vsp

\begin{assumption}
	\label{as.fg}
	The continuously differentiable function $f$ is $m_f$-strongly convex with an $L_f$-Lipschitz continuous gradient and the non-differentiable function $g$ is proper, lower semi-continuous, and convex.
	\end{assumption}

\vsp

\begin{assumption}
	\label{as.tt}
	The matrix $T \in \mathbb{R}^{m\times n}$ has a full row rank. 
\end{assumption}

	 \vspace*{-1ex}
\subsection{Proximal augmented Lagrangian}

The proximal operator of the function $g$ is given by~\cite{parboy14}
\[
\prox_{\mu g}(v) \, \DefinedAs\, \argmin_x\, \left( g(x)\,+\,\frac{1}{2\mu}\Vert x \,-\, v \Vert^2 \right)
\]
and the associated value function is Moreau envelope,
\[
M_{\mu g}(v) 
\, \DefinedAs \, 
g(\prox_{\mu g}(v)) \, + \, \frac{1}{2\mu} \, \|\prox_{\mu g}(v) \, - \, v\|^2
\]
where $\mu$ is a positive parameter. The Moreau envelope is continuously differentiable, even when $g$ is not, and its gradient is determined by,
\be
\nabla M_{\mu g}(v)
\,=\,
\dfrac{1}{\mu} \left( v \,-\, \prox_{\mu g}(v) \right).
\non
\ee

The augmented Lagrangian of the constrained optimization problem~\eqref{eq.opt} is given by
\be
\non
\mathcal{L}(x,z;y)
\,=\,
f(x)\,+\,g(z)\,+\,y^T(Tx-z)\,+\,\dfrac{1}{2\mu}\|Tx-z\|^2
\ee
where $x\in\mathbb{R}^n$ and $z\in\mathbb{R}^m$ are the primal variables, $y \in \mathbb{R}^m$ is a dual variable, and $\mu$ is a positive parameter. Completion of squares brings~$\mathcal{L}(x,z;y)$ into the following form 
\[
\mathcal{L}(x,z;y)
\,=\,
f(x)\,+\,g(z)\,+\,\frac{1}{2\mu}\|z-(Tx+\mu y)\|^2\,-\,\frac{\mu}{2}\|y\|^2.
\]
The minimizer of the augmented Lagrangian with respect to $z$ is 
\[
z_\mu^\star (x; y)
\, = \,
\prox_{\mu g}( Tx \, + \, \mu y)
\]
where $\prox_{\mu g}$ denotes the proximal operator of the function $g$. Restriction of $\mathcal{L}$ along the manifold determined by $z_\mu^\star (x; y)$ yields the proximal augmented Lagrangian~\cite{dhikhojovTAC19},
\be
\ba{rrl}
\mathcal{L}_\mu(x; y)
& \!\!\! \DefinedAs \!\!\! &
\mathcal{L}(x,z_\mu^\star (x; y);y)
\\[0.1cm]
& \!\!\! = \!\!\! &
f(x)\,+\,M_{\mu g}(Tx+\mu y) \, - \, \dfrac{\mu}{2}\left\|y\right\|^2
\ea
\label{eq.pal}
\ee
where $M_{\mu g}$ is the Moreau envelope of the function $g$. Continuous differentiability of the proximal augmented Lagrangian $\mathcal{L}_\mu(x; y)$ with respect to both $x$ and $y$ follows from continuous differentiability of $M_{\mu g}$ and Lipschitz continuity of the gradient of $f$. 

\vspace*{-1ex}
\subsection{Examples}

We next provide examples of convex optimization problems that can be brought into the form~\eqref{eq.opt}. For instance, the problem with linear equality constraints,
\begin{equation}
\begin{array}{rl}
\minimize\limits_{x}
&
f(x)\; 
\\[0.15cm]
\subject 
& 
Tx \, = \, b
\end{array}
\label{eq.ex3}
\end{equation}
where $b \in \mathbb{R}^{m}$ is a given vector can be cast as~\eqref{eq.opt} by choosing $g(z)$ to be an indicator function, $g(z_i) \DefinedAs \{ 0, \, z_i = b_i$; $\infty, \, \mbox{otherwise} \}$. In this case, the proximal operator is given by $\prox_{\mu g} (v_i) = b_i $, the associated Moreau envelope is $M_{\mu g}(v_i) = \frac{1}{2\mu} \, (v_i-b_i)^2$, and the gradient of the Moreau envelope is 
$
\nabla M_{\mu g}(v_i)
= 
(v_i-b_i)/\mu.
$

	\vsp
	
The problem with linear inequality constraints,
\begin{equation}
\begin{array}{rl}
\minimize\limits_{x}
&
f(x)\; 
\\[0.15cm]
\subject 
& 
Tx \, \leq \, b
\end{array}
\label{eq.ex1}
\end{equation}
where $b \in \mathbb{R}^{m}$ is a given vector can be cast as~\eqref{eq.opt} by choosing $g(z)$ to be an indicator function, $g(z_i) \DefinedAs \{ 0, \, z_i \leq b_i$; $\infty, \, \mbox{otherwise} \}$. The proximal operator is $\prox_{\mu g} (v_i) = \min\{v_i, b_i\} $, the associated Moreau envelope is $M_{\mu g}(v_i) = \{\frac{1}{2\mu} \, (v_i-b_i)^2, v_i> b_i; 0, \mbox{otherwise}\}$, and the gradient of the Moreau envelope is 
$
\nabla M_{\mu g}(v_i)
= 
\max (0,(v_i-b_i)/\mu).
$

\vsp

Unconstrained optimization problems with nonsmooth regularizers can be also represented by~\eqref{eq.opt}. 
For example, the logistic regression with elastic net regularization~\cite{zouhas05}
\be
\label{eq.ex2}
\ba{rl}
\minimize\limits_{x}
&
\ell(x) \,+\, \frac{1}{2} \|x\| ^2\,+\, \|x\|_1
\ea
\ee
where the logistic loss $\ell(x)$ is given by
$
\sum_{i = 1}^{d} ( \log(1 + \mre^{a_i^T x}) - y_i a_i^T x )
$
where $a_i$ is the feature vector and $y_i\in\{0,1\}$ is the corresponding label. Choosing $f(x) \DefinedAs \ell(x) + \frac{1}{2} \|x\| ^2 $, $g(z) \DefinedAs \|z\|_1 $, and $T \DefinedAs I$ brings~\eqref{eq.ex2} into~\eqref{eq.opt}. The proximal operator is the soft-thresholding $\prox_{\mu g} (v_i) = \text{sign}(v_i)\max\{\vert v_i\vert-\mu, 0\} $, the associated Moreau envelope is the Huber function $M_{\mu g}(v_i) = \{\frac{1}{2\mu} \, v_i^2, \vert v_i\vert\leq \mu; \vert v_i\vert-\frac{\mu}{2},\vert v_i\vert \geq \mu \}$, and the gradient of the Moreau envelope is the saturation function
	$
	\nabla M_{\mu g}(v_i)
	=
	\sign(v_i) \min \, ( |v_i|/\mu, \,1 ).
	$
	
	\vspace*{-1ex}
\subsection{Primal-dual gradient flow dynamics}
The primal-dual gradient flow dynamics can be used to compute the saddle points of~\eqref{eq.pal}, 
	\begin{subequations}
	\label{eq.ahu}
\be
\dot{w} 
\, = \,
F(w)
\ee
where $w \DefinedAs [ \, x^T \; y^T \, ]^T$ and 
\be
\ba{rrl}
F(w) 
& \!\! \DefinedAs \!\! &	
\tbo{- \nabla_x \mathcal{L}_{\mu}(x;y)}{\phantom{-} \nabla_y \mathcal{L}_{\mu}(x;y)} 
\\[0.35cm]
& \!\! = \!\! &
\tbo{
	- ( \nabla f(x) + T^T \nabla M_{\mu g}(Tx+\mu y))
}
{
	\mu \, ( \nabla M_{\mu g}(Tx+\mu y) \, - \, y )
}.
\ea
\ee
\end{subequations}
Let $\bar{w} \DefinedAs [ \, \bar{x}^T \, \bar{y}^T \, ]^T$ denote the equilibrium points of~\eqref{eq.ahu}, i.e., the solutions to $F(\bar{w}) = 0$. The Lagrangian of the optimization problem~\eqref{eq.opt} is given by $f(x)+g(z)+y^T(Tx-z)$ and the associated KKT optimality condition are, 
\be
\ba{rcl}
0 & \!\! = \!\! & \nabla f(x^\star) \,+\, T^T y^\star
\\[0.1cm]
0 & \!\!\in \!\! & \partial g(z^\star) \,-\, y^\star
\\[0.1cm]
0 & \!\! = \!\! & Tx^\star \,-\, z^\star
\ea
\label{eq.kkt}
\ee
where $\partial g$ is the subgradient of $g$. The following lemma establishes the relation between $\bar{w}$ and the optimality conditions~\eqref{eq.kkt}; see~\cite{dhikhojovTAC19} for details.

\vsp

\begin{lemma}\label{lem.sol}
	Let Assumptions~\ref{as.feasible} and~\ref{as.fg} hold. The equilibrium point $\bar{w} \DefinedAs [ \, \bar{x}^T \, \bar{y}^T \, ]^T$ of the primal-dual gradient flow dynamics~\eqref{eq.ahu} satisfies optimality conditions~\eqref{eq.kkt} with $\bar{z} \DefinedAs \prox_{\mu g} (T\bar{x} + \mu \bar{y})$. Moreover, $(\bar{x},\bar{z} )$ is the optimal solution of nonsmooth composite optimization problem~\eqref{eq.opt}. 
\end{lemma}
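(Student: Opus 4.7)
The plan is to unpack the equilibrium condition $F(\bar{w}) = 0$ componentwise, use the identity $\nabla M_{\mu g}(v) = (v - \prox_{\mu g}(v))/\mu$ together with the variational characterization of the proximal operator, and then match each resulting equation with one of the three KKT conditions in \eqref{eq.kkt}.

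First, setting $F(\bar{w}) = 0$ splits into
\begin{equation*}
\nabla f(\bar{x}) \,+\, T^T \nabla M_{\mu g}(T\bar{x} + \mu \bar{y}) \,=\, 0,
\qquad
\nabla M_{\mu g}(T\bar{x} + \mu \bar{y}) \,-\, \bar{y} \,=\, 0.
\end{equation*}
The second equation immediately yields $\nabla M_{\mu g}(T\bar{x} + \mu \bar{y}) = \bar{y}$, which when substituted into the first gives $\nabla f(\bar{x}) + T^T \bar{y} = 0$, i.e.\ the first line of \eqref{eq.kkt} with $y^\star = \bar{y}$ and $x^\star = \bar{x}$.

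Next I would apply the explicit formula for $\nabla M_{\mu g}$ to $\nabla M_{\mu g}(T\bar{x} + \mu \bar{y}) = \bar{y}$, obtaining
\begin{equation*}
\mu \bar{y} \,=\, T\bar{x} \,+\, \mu \bar{y} \,-\, \prox_{\mu g}(T\bar{x} + \mu \bar{y}) \,=\, T\bar{x} \,+\, \mu \bar{y} \,-\, \bar{z},
\end{equation*}
which collapses to $T\bar{x} - \bar{z} = 0$, the third KKT condition. For the remaining inclusion $0 \in \partial g(\bar{z}) - \bar{y}$, I would invoke the defining variational principle of the proximal operator: since $\bar{z} = \prox_{\mu g}(T\bar{x} + \mu \bar{y})$ is the minimizer of $g(\cdot) + \frac{1}{2\mu}\|\cdot - (T\bar{x}+\mu\bar{y})\|^2$, its first-order optimality condition reads $0 \in \partial g(\bar{z}) + \frac{1}{\mu}(\bar{z} - T\bar{x} - \mu \bar{y})$. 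Using $\bar{z} = T\bar{x}$ established above, this simplifies to $0 \in \partial g(\bar{z}) - \bar{y}$, matching the middle line of \eqref{eq.kkt}.

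Finally, to conclude that $(\bar{x},\bar{z})$ solves \eqref{eq.opt}, I would appeal to standard convex duality: under Assumptions~\ref{as.feasible} and~\ref{as.fg}, the objective $f + g$ is convex, the constraint $Tx - z = 0$ is affine, and Slater-type regularity holds (the linear constraint is qualified), so the KKT conditions \eqref{eq.kkt} are sufficient for global optimality of the primal pair. The only mildly subtle step is the clean conversion of the proximal identity into the subgradient inclusion $0 \in \partial g(\bar{z}) - \bar{y}$, but this is routine given the definition of $\prox_{\mu g}$; no Lyapunov argument or rank assumption on $T$ is actually needed for this lemma, as only the equilibrium algebra is being interrogated.
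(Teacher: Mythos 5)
Your argument is correct and is essentially the standard one: the paper itself does not reproduce a proof of this lemma but defers to the cited reference \cite{dhikhojovTAC19}, where the same unpacking of $F(\bar w)=0$ via the identity $\nabla M_{\mu g}(v) = (v - \prox_{\mu g}(v))/\mu$ and the first-order optimality condition defining $\prox_{\mu g}$ is used to recover the three KKT relations, with sufficiency of KKT for the convex problem giving optimality of $(\bar x, \bar z)$. The only cosmetic quibble is that you invoke Slater-type regularity, which is needed for necessity of KKT but not for the sufficiency direction you actually use; convexity of $f+g$ and affineness of the constraint already suffice, and your observation that Assumption~3 on the rank of $T$ plays no role here is accurate.
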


\vsp

Under Assumptions~\ref{as.feasible}-\ref{as.tt}, the global exponential stability of the primal-dual gradient flow dynamics~\eqref{eq.ahu} was established in~\cite{dhikhojovTAC19} by employing the theory of integral quadratic constraints in the frequency domain. An upper bound on the convergence rate was also obtained but the explicit form for the quadratic Lyapunov function was not provided. Recent reference~\cite{quli19} used a Lyapunov-based approach to show the global exponential stability for a class of problems with a strongly convex and smooth objective function $f$ subject to either affine equality or inequality constraints. In our preliminary work~\cite{dinjovACC19}, a similar quadratic Lyapunov function was used to prove global exponential stability of the primal-dual gradient flow dynamics~\eqref{eq.ahu}. In what follows, we employ the theory of IQCs in the time domain to obtain a quadratic Lyapunov function that establishes the global exponential stability of~\eqref{eq.ahu} and yields less conservative convergence rate estimates.

	\vspace*{-1ex}
\section{Global exponential stability via quadratic Lyapunov function}
	\label{sec.glob}

In this section, we identify a quadratic Lyapunov function that can be used to establish the global exponential stability of the primal-dual gradient flow dynamics~\eqref{eq.ahu} for strongly convex problems~\eqref{eq.opt} with full row rank matrix $T$ and provide an estimate of the convergence rate.  

\begin{figure}
	\centering
	\includegraphics[width=2.75in,height=1.25in]{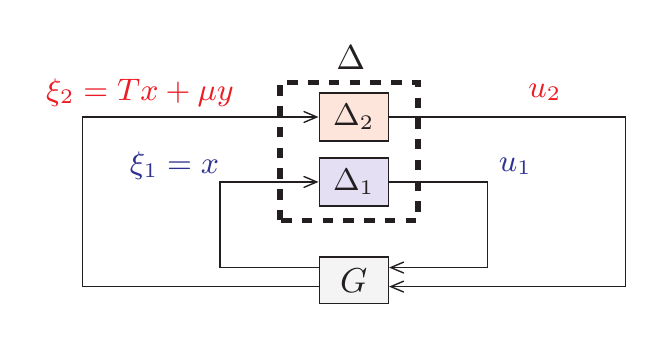}
	\caption{Block diagram of primal-dual gradient flow dynamics~\eqref{eq.ahu}: $G$ is an exponentially stable LTI system in~\eqref{eq.dtdyn1} and $\Delta$ is a static nonlinear map that satisfies quadratic constraint~\eqref{eq.IQC}. } 
	\label{fig.block}
\end{figure}

	\vspace*{-1ex}
\subsection{A system-theoretic viewpoint of primal-dual dynamics}

Inspired by~\cite{lesrecpac16}, we view~\eqref{eq.ahu} as a feedback interconnection of an LTI system with static nonlinearities; see Fig.~\ref{fig.block}. These are determined by the gradient of the smooth part of the objective function $\nabla f$ and the proximal operator $\prox_{\mu g}$. Structural properties of nonlinear terms that we exploit in our analysis are specified in Assumption~\ref{as.fg}. 

Let $u = [ \, u_1^T \; u_2^T \, ]^T$ and $\xi = [ \, \xi_1^T \; \xi_2^T \, ]^T$, with
\be
\ba{rcl}
\xi_1 & \!\! \DefinedAs \!\! &x
\\[0.1cm]
\xi_2 & \!\! \DefinedAs \!\! & Tx+\mu y
\\[0.1cm]
u_1 & \!\!\DefinedAs \!\! & \Delta_1(\xi_1) \; = \; \nabla f(x)-m_f x
\\[0.1cm]
u_2 & \!\! \DefinedAs \!\! & \Delta_2(\xi_2) \; = \; \prox_{\mu g} (Tx+\mu y).
\ea
\non
\ee
For strongly convex $f$, the primal-dual dynamics~\eqref{eq.ahu} can be cast as an LTI system $G$ in feedback with a nonlinear block $\Delta$, where
	\begin{subequations}
	\label{eq.dtdyn}
\be
\ba{rcl}
\dot{w} 
& \!\!\! = \!\!\! & A w \,+\, B u
\\[0.1cm] 
\xi
& \!\!\! = \!\!\! &  C w
\\[0.1cm] 
u
& \!\!\! = \!\!\! &  \Delta (\xi)
\ea
\label{eq.dtdyn1}
\ee
with
\be
\begin{aligned}
	& A = \tbt{
		-(m_f I + \tfrac{1}{\mu} T^T T) 
	}{-T^T}{T}{ 0 }
	\\
	& B =\tbt{-I}{\tfrac{1}{\mu}T^T}{0}{- I},
	\;\;
	C = \tbt{I}{0}{T}{\mu I}.
\end{aligned}
\label{eq.ABC}
\ee
\end{subequations}
The input is given by $u = \Delta(\xi)$ where $\Delta (\xi)$ is a $2\times2$ block-diagonal matrix with the diagonal blocks $\Delta_1 (\xi_1)$ and $\Delta_2 (\xi_2)$. These nonlinearities satisfy the pointwise quadratic inequalities~\cite{dhikhojovTAC19}
	\be
\begin{bmatrix}
	\xi_i \, - \, \bar{\xi}_i\\
	u_i \, - \, \bar{u}_i \\
\end{bmatrix}^T\tbt {0}{\hat{L}_i I}{\hat{L}_i I}{-2I}
\begin{bmatrix}
	\xi_i \, - \, \bar{\xi}_i\\
	u_i \, - \, \bar{u}_i
\end{bmatrix} \,\geq\, 0
\non
\ee
where  
	$\bar{w}
\DefinedAs
[\,
\bar{x}^T \, \bar{y}^T
\,]^T$ is the equilibrium point of system~\eqref{eq.dtdyn}, $\bar{\xi}_1=\bar{x}$, $\bar{\xi}_2=T\bar{x}+\mu\bar{y}$, $\hat{L}_1 \DefinedAs L_f - m_f$, and $\hat{L}_2 = 1$. This is because $\Delta_1$ is the gradient of the convex function $f(\xi_1)-(m_f/2)\|\xi_1\|^2$ and, thus, it is Lipschitz continuous with parameter $L_f - m_f$~\cite[Proposition~5]{lesrecpac16}; and $\Delta_2$ is given by the proximal operator of the function $g$ and, thus, it is firmly non-expansive (i.e., Lipschitz continuous with parameter one)~\cite{parboy14}. These quadratic constraints can be combined into,
	\be
\tbo{\xi \, - \, \bar{\xi}}{u \, - \, \bar{u}}^T
\underbrace{\tbt {0}{\Pi_0}{\Pi_0}{-2 \Lambda}}_{\Pi}
\tbo{\xi \, - \, \bar{\xi}}{u \, - \, \bar{u}}
\,\geq\, 0
\label{eq.IQC}
\ee
where
\[
\Pi_0
\; = \;
\tbt{\lambda_1 \hat{L}_1 I}{0}{0}{\lambda_2 I},
~~
\Lambda
\; = \;
\tbt{\lambda_1 I}{0}{0}{\lambda_2 I}
\]
and $\lambda_1$, $\lambda_2$ are non-negative scalars.

	\vspace*{-1ex}
\subsection{Lyapunov-based analysis for global exponential stability}

For the primal-dual gradient flow dynamics~\eqref{eq.ahu} with equilibrium point $\bar{w}$, we propose a quadratic Lyapunov function candidate
	\begin{subequations}
	\label{eq.lyap}
	\be
	V(\tilde{w}) 
	\; = \; 
	\tilde{w}^T P \, \tilde{w}
	\ee 
with $\tilde{w} \DefinedAs w - \bar{w}$ and
	\be
	P 
	\, = \,  
	\alpha
	\left[
	\begin{array}{cc}
	I & (1/\mu) \, T^T
	\\
	 (1/\mu) \, T & (1 + {m_f}/{\mu}) \, I \, + \, (1/{\mu^2}) \, TT^T
	\end{array}\right]
	\label{eq.P}
	\ee
	\end{subequations}
where $\alpha$ is a positive parameter, $m_f$ is the strong convexity module of the function $f$, $\mu$ is the augmented Lagrangian parameter, and $T$ is the full rank matrix associated with the linear equality constraint in~\eqref{eq.opt}. The matrix $P$ is positive definite and, for $A$ in~\eqref{eq.ABC}, we have  
	\be
	A^T P \, + \, PA 
	\,=\,
	-
	2 \alpha
	\left[
	\begin{array}{cc}
		m_f I & 0
		\\
		0 & (1/\mu) \, TT^T
	\end{array}\right]
	\, \prec \, 0.
	\label{eq.PAAP}
	\ee
Thus, $A$ is a Hurwitz matrix and the LTI system in Fig.~\ref{fig.block} is exponentially stable. Furthermore, the derivative of $V$ along the solutions of~\eqref{eq.dtdyn1} is determined by
	\begin{subequations}
	\be
	\dot{V}
	\; = \; 
	\tbo{\tilde{w}}{\tilde{u}}^T
	\tbt{A^T P + P A}{P B}{B^T P}{0}
	\tbo{\tilde{w}}{\tilde{u}}
	\label{eq.Vdot}
	\ee
where $\tilde{u} \DefinedAs u - \bar{u}$, and the substitution of the output equation $\xi = C w$ in~\eqref{eq.dtdyn1} to~\eqref{eq.IQC} yields the quadratic inequality,
\be
\tbo{\tilde{w}}{\tilde{u}}^T
\tbt{0}{C^T \Pi_0}{\Pi_0 C}{-2 \Lambda}
\tbo{\tilde{w}}{\tilde{u}}
\,\geq\, 0.
\label{eq.IQC1}
\ee
\end{subequations}
The sufficient condition for the global exponential stability of~\eqref{eq.dtdyn} is obtained by adding~\eqref{eq.IQC1} to~\eqref{eq.Vdot} and it amounts to the existence of a positive constant $\rho$ such that 
\be
\tbt{-(A^T P + P A + 2 \rho P)}{-(P B +  C^T \Pi_0)}{-(P B +  C^T \Pi_0)^T}{2  \Lambda}
\, \succ \, 0.
\label{eq.LMI0}
\ee
If this condition holds, we have $\dot{V} \leq -2 \rho V$. 
Thus, $V(\tilde{w}(t))\leq V(\tilde{w}(0)) \, \mathrm{e}^{-2\rho t}$ and since $P \succ 0$,  
	\[
	\Vert \tilde{w} (t) \Vert \, \leq \, \sqrt{\kappa_p} \, \Vert \tilde{w}(0) \Vert \, \mathrm{e}^{-\rho t},~\mbox{for all}~t \, \geq \, 0
	\]
where $\kappa_p$ is the condition number of the matrix $P$.
Since $\Lambda \succ 0$, the remaining task is to verify the existence of the positive parameters $\alpha$, $\mu$, $\lambda_1$, $\lambda_2$, and $\rho$ such that
	\be
	\ba{rcl}
	&&\!\!\!\!\!\!\!\!\!\! -(A^T P + P A + 2 \rho P) 
	\\[0.15cm]
	&& - ~
	\tfrac{1}{2} (P B + C^T \Pi_0) \, \Lambda^{-1} (P B + C^T \Pi_0)^T 
	\succ 0
	\ea
	\label{eq.key}
	\ee
which follows from the application of the Schur complement to~\eqref{eq.LMI0}. 

We are now ready to prove the global exponential stability of the primal-dual gradient flow dynamics~\eqref{eq.ahu} and provide estimates of the convergence rate $\rho$ for $L_f > m_f$. Similar result can be established for $L_f = m_f$.

\vsp
\begin{theorem}\label{thm.main1}
	Let Assumptions~\ref{as.feasible}-\ref{as.tt} hold, let $L_f > m_f$, and let $\sigma_{\max}(T)$ be the largest singular value of the matrix $T$. Then, the global exponential stability of the primal-dual gradient flow dynamics~\eqref{eq.ahu} can be established with Lyapunov function~\eqref{eq.lyap} if the augmented Lagrangian parameter satisfies	
	\be
	\mu > \max \left( \tfrac{L_f - m_f}{4}, \tfrac{\sigma^2_{\max}(T)}{8m_f}\left(1+\sqrt{1+ \tfrac{16m_f^2}{\sigma^2_{\max}(T)}} \right) \right).
	\label{eq.m}
	\ee
\end{theorem}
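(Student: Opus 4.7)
The plan is to verify the sufficient condition~\eqref{eq.key} under the stated bound on $\mu$. The derivation preceding the theorem statement has already reduced global exponential stability of~\eqref{eq.ahu} to the existence of positive parameters $\alpha, \lambda_1, \lambda_2, \rho$ for which~\eqref{eq.key} holds; the remaining task is algebraic. First, using~\eqref{eq.PAAP}, I would write $-(A^T P + PA + 2\rho P)$ as the sum of the block-diagonal positive-definite matrix $2\alpha\,\mathrm{blockdiag}(m_f I,\, (1/\mu)TT^T)$ and a $-2\rho P$ perturbation that will be absorbed by choosing $\rho$ small at the end. Next, I would expand $PB + C^T\Pi_0$ in block form: the off-diagonal coupling in $P$ and the $T^T$ entries in $B$ and $C$ produce blocks built from $I$, $T$, $T^T$, $T^T T$, and $TT^T$.

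A natural choice of multipliers is $\lambda_1 = 2\alpha/\hat{L}_1 = 2\alpha/(L_f - m_f)$ and $\lambda_2 = 2\alpha/\mu$, chosen to normalize the sector bounds $\hat{L}_i$ so that after forming the Schur complement $\tfrac{1}{2}(PB+C^T\Pi_0)\Lambda^{-1}(PB+C^T\Pi_0)^T$ the residual matrix is a polynomial in $TT^T$ and $I$ with $\mu$-dependent coefficients. Using a singular value decomposition $T = U \Sigma V^T$ and rotating the coordinates via $x \mapsto V^T x$, $y \mapsto U^T y$, the inequality~\eqref{eq.key} decouples along the singular directions of $T$: the directions in $\ker(T)$ yield a single scalar condition involving only $m_f$, $L_f$, and $\mu$, while each paired singular value $\sigma\in(0,\sigma_{\max}(T)]$ yields a $2\times 2$ LMI parameterized by $\sigma$.

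Positive definiteness at $\rho = 0$ then reduces to two families of scalar inequalities: one is monotone in $\mu$ and gives the first threshold $\mu > (L_f - m_f)/4$ coming from the $x$-block and the Lipschitz sector, and the other is a quadratic inequality in $\mu$ (at fixed $\sigma$) that is tightest at $\sigma = \sigma_{\max}(T)$, whose positive root is precisely the second term inside the max in~\eqref{eq.m}. Under condition~\eqref{eq.m} both families are strictly satisfied, so by continuity there exists $\rho > 0$ for which~\eqref{eq.key} still holds; from $\dot V \leq -2\rho V$ and $P \succ 0$ we then get $\|\tilde w(t)\| \leq \sqrt{\kappa_p}\,\|\tilde w(0)\|\mathrm{e}^{-\rho t}$.

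The main obstacle is the bookkeeping required to simplify the Schur complement: terms with $T$, $T^T$, and $TT^T$ entering at different orders in $\mu$ (through both $P$ and $B$, $C$) must be combined carefully, and the coupling between the $x$-block and the $y$-block of the resulting matrix must be disentangled by the right rotation. The key trick is that the multiplier choices align $\lambda_1$, $\lambda_2$ with the sector bounds so that after the SVD rotation the coupling terms reduce to a single scalar quadratic in $\mu$ per singular direction; the worst case at $\sigma = \sigma_{\max}(T)$ produces the bound~\eqref{eq.m}, and the free parameter $\alpha$ cancels throughout so it can be normalized away.
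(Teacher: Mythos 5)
Your overall skeleton matches the paper's: verify \eqref{eq.key} at $\rho=0$, invoke continuity to obtain $\rho>0$, and reduce the problem to positive definiteness of $-(A^TP+PA)-\tfrac{1}{2}(PB+C^T\Pi_0)\Lambda^{-1}(PB+C^T\Pi_0)^T$. The gap is in your multiplier choice, and it is fatal rather than cosmetic. The paper sets $\lambda_1=\alpha/\hat{L}_1$ and $\lambda_2=(\alpha/\mu)(1+m_f/\mu)$; these are engineered so that both scalar factors $\alpha-\lambda_1\hat{L}_1$ and $\mu\lambda_2-\alpha(1+m_f/\mu)$ vanish, which simultaneously annihilates the off-diagonal block $M_0$ in \eqref{eq.M} and the identity-proportional terms in $M_1$ and $M_2$. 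Condition \eqref{eq.keys0} then collapses to the block-diagonal pair $4\alpha m_f>\lambda_2\sigma^2_{\max}(T)$ and $4\mu\lambda_1>\alpha$, which is exactly \eqref{eq.m} --- no SVD rotation is needed because there is no coupling left to disentangle. With your choices $\lambda_1=2\alpha/\hat{L}_1$ and $\lambda_2=2\alpha/\mu$, neither cancellation occurs. Concretely, $M_1$ picks up the term $\tfrac{(\alpha-\lambda_1\hat{L}_1)^2}{2\lambda_1}I=\tfrac{\alpha\hat{L}_1}{4}I$, so the primal block requires $2m_f>\hat{L}_1/4+\sigma^2_{\max}(T)/\mu$, i.e.\ $L_f<9m_f$, an assumption not in the theorem. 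Worse, $M_2$ picks up $\tfrac{1}{2\lambda_2}\bigl(\mu\lambda_2-\alpha(1+m_f/\mu)\bigr)^2 I=\tfrac{\alpha\mu}{4}(1-m_f/\mu)^2 I$, which grows linearly in $\mu$, while the only positive contribution to the dual block coming from \eqref{eq.PAAP} is $(2\alpha/\mu)TT^T=O(1/\mu)$. Hence for large $\mu$ the dual block becomes negative definite and \eqref{eq.key} fails --- the opposite of the theorem, which asserts validity for all $\mu$ above a threshold.

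The SVD decoupling you describe is a legitimate device in general (all blocks are indeed polynomials in $T$, $T^T$, and $I$), but it cannot rescue the wrong multipliers, and your claim that the worst case at $\sigma=\sigma_{\max}(T)$ "produces precisely the bound \eqref{eq.m}" is asserted rather than derived; with your $\lambda_1,\lambda_2$ it is false. The fix is to treat $\lambda_1,\lambda_2$ not as sector normalizations but as the unique (up to $\alpha$) values that zero out $M_0$ and the squared scalar factors; once that is done the two remaining scalar inequalities are $\mu>(L_f-m_f)/4$ and $4m_f\mu^2>(\mu+m_f)\sigma^2_{\max}(T)$, whose positive root is the second term in \eqref{eq.m}.
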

\begin{proof}
	If~\eqref{eq.key} holds for $\rho = 0$, the continuity of the left-hand side of~\eqref{eq.key} with respect to $\rho$ implies the existence of $\rho>0$ such that~\eqref{eq.key} holds. For $\rho=0$,~\eqref{eq.key} becomes
	\be
	-(A^T P + P A ) 
	\, - \,
	\tfrac{1}{2 } (P B + C^T \Pi_0) \, \Lambda^{-1} (P B + C^T \Pi_0)^T 
	\succ 0
	\label{eq.keys0}
	\ee
	where $A^T P + P A$ is given by~\eqref{eq.PAAP}, and $P B + C^T \Pi_0 $ reads
	\be
	\left[
	\begin{array}{cc}
		(\lambda_1 \hat{L}_1 - \alpha) \, I & \lambda_2 \, T^T 
		\\
		- (\alpha / \mu ) \, T & (\mu \lambda_2 -\alpha ( 1 +  {\alpha m_f} / {\mu} ) ) \, I
	\end{array}
	\right]
	\non
	\ee
where $\hat{L}_1 \DefinedAs L_f - m_f > 0$. Thus, the matrix $M \DefinedAs \tfrac{1}{2} (P B + C^T \Pi_0) \, \Lambda^{-1} (P B + C^T \Pi_0)^T$ is given by
		\be
	M \,=\,
	\left[
	\begin{array}{cc}
		M_1 & M_0^T
		\\
		M_0 & M_2
	\end{array}
	\right]
	\label{eq.M}
	\ee
where	 
	\be
	\begin{array}{rcl}
		M_1 
		&\!\!=\!\!& 
		\frac{1}{2} 
		\left(
		\frac{(\alpha - \lambda_1\hat{L}_1)^2}{\lambda_1} \, I  \, + \, {\lambda_2} \, T^T T 
		\right)
		\\[0.25cm]
		M_2 &\!\!=\!\!& 
		\frac{1}{2} 
		\left(
		\frac{\alpha^2}{\lambda_1 \mu^2} \, TT^T \, + \, \frac{1}{\lambda_2} ( \mu\lambda_2- \alpha(1+\frac{m_f}{\mu}) )^2 \,I
		\right)
		\\[0.25cm]
		M_0 &\!\!=\!\!& 
		\frac{1}{2} 
		\left(
		\frac{\alpha (\alpha - \lambda_1\hat{L}_1)}{\lambda_1 \mu} \, + \,  \mu \lambda_2 \, - \, \alpha (1+\frac{m_f}{\mu}) 
		\right) T
	\end{array}
	\non
	\ee
and $\alpha$, $\lambda_1$, $\lambda_2$, and $\mu$ are positive parameters that have to be selected such that~\eqref{eq.keys0} holds. Setting $\lambda_1 \DefinedAs \alpha/\hat{L}_1$ and $\lambda_2 \DefinedAs (\alpha/\mu) (1+m_f/\mu)$ yields $M_0=0$ and~\eqref{eq.keys0} simplifies~to
		\be
		\left[
		\begin{array}{cc}
			2\alpha m_f I -\frac{\lambda_2}{2} \, T^T T & 0
			\\
			0 & \frac{\alpha}{\mu} ( 2- \frac{\alpha}{2 \lambda_1 \mu} ) \, TT^T
		\end{array}
		\right] 	
		\, \succ \, 0
		\non 
		\ee
or, equivalently, 
	\[
	4\alpha m_f \, >  \, \lambda_2 \sigma^2_{\max}(T)
	~~\mbox{and}~~
	4\mu \lambda_1 \, > \, \alpha.
	\] 
Combining these two conditions with the above definitions of $\lambda_1$ and $\lambda_2$ yields~\eqref{eq.m}.
\end{proof}

\vsp

We next utilize the choices of parameters $\lambda_1$ and $\lambda_2$ in Theorem~\ref{thm.main1} to estimate the convergence rate $\rho$. 

	\vsp

\begin{proposition}\label{thm.main2}
	Let Assumptions~\ref{as.feasible}-\ref{as.tt} hold, let $L_f > m_f$, and let $\sigma_{\min}(T)$ and $\sigma_{\max}(T)$ be the smallest and the largest singular values of the matrix $T$. Then, the primal-dual gradient flow dynamics~\eqref{eq.ahu} are globally exponentially stable with the rate 
		\begin{subequations}
		\be
		\rho 
		\, \geq \, 
		\rho_0(\mu) 
		\, \DefinedAs \,
		\frac{\sigma_{\min}^2(T)}{2 (\mu \, + \, m_f \, + \, \sigma_{\max}^2(T)/\mu )}
		\label{eq.rho}
		\ee
if $ \mu > \max \, (L_f-m_f,\hat{\mu})$, where 		
		\be
		\hat{\mu}  
		\,\DefinedAs\, 
		\inf \left\{ 
		\mu\in[\sigma_{\max}(T),\infty), \; \beta(\mu)\, < \, 2 m_f  \right\}
		\label{eq.mu}
		\ee 
		\be
		\beta(\mu) 
		\, \DefinedAs \, 
		\frac{(m_f + \mu) \sigma_{\max}^2(T)}{2\mu^2} 
		\, + \, 
		\frac{2\rho_0(\mu) (\mu + 4\rho_0(\mu))}{\mu}.
		\label{eq.beta}
		\ee
		\end{subequations}
\end{proposition}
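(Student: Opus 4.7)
The plan is to reuse the multiplier choices $\lambda_1 = \alpha/\hat{L}_1$ and $\lambda_2 = (\alpha/\mu)(1 + m_f/\mu)$ from the proof of Theorem~\ref{thm.main1}, but now to retain the $-2\rho P$ term in the sufficient condition~\eqref{eq.key} and determine the largest $\rho > 0$ for which positive definiteness is preserved. Under these multipliers the $\rho=0$ version of~\eqref{eq.key} is block diagonal; adding $-2\rho P$ introduces off-diagonal blocks proportional to $-(2\rho\alpha/\mu) T^T$ and subtracts $2\rho$-multiples of the corresponding blocks of $P$ from the diagonal blocks.

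First I would write out the $2\times 2$ block matrix $-(A^T P + PA + 2\rho P) - M$ explicitly using~\eqref{eq.PAAP} and the expression for $M$ from the proof of Theorem~\ref{thm.main1}. After dividing by $\alpha$, the diagonal blocks become combinations of $I$, $T^T T$, and $T T^T$ with coefficients depending on $\mu$, $m_f$, $\hat{L}_1$, and $\rho$, while the $(1,2)$ block is $-(2\rho/\mu)T^T$. Next, I would apply the Schur complement with the $(2,2)$ block as the pivot, reducing positive definiteness to two conditions: (i) the $(2,2)$ block is positive definite, and (ii) the resulting Schur complement on the $(1,1)$ block is positive definite. Because $T$ has full row rank, $\sigma_{\min}^2(T)\, I \preceq T T^T \preceq \sigma_{\max}^2(T)\, I$, so both conditions can be reduced to scalar inequalities in $\rho$ and $\mu$.

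Then I would substitute $\rho = \rho_0(\mu)$ from~\eqref{eq.rho}. Using $\mu > L_f - m_f$ to ensure $2 - \hat{L}_1/(2\mu) > 3/2$, condition (i) should reduce, after invoking $T T^T \succeq \sigma_{\min}^2(T) I$, to precisely the inequality defining $\rho_0(\mu)$, so that (i) holds with equality-at-worst. For condition (ii) the Schur complement contribution is proportional to $(2\rho/\mu)^2 \, T^{T} \bigl[(2,2)\text{-block}\bigr]^{-1} T$; bounding this contribution via $\sigma_{\max}(T)$ and the smallest eigenvalue of the $(2,2)$ block should produce exactly the inequality $\beta(\mu) < 2m_f$ with $\beta(\mu)$ as in~\eqref{eq.beta}. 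Since this is the defining inequality for $\hat\mu$ in~\eqref{eq.mu}, the bound $\mu > \hat\mu$ closes (ii). Combining (i) and (ii) with $\mu > \max(L_f-m_f,\hat\mu)$ yields $\dot V \le -2\rho_0(\mu) V$ and hence $\|\tilde w(t)\| \le \sqrt{\kappa_p}\,\|\tilde w(0)\|\,\mre^{-\rho_0(\mu) t}$.

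The hard part will be the algebraic bookkeeping: correctly tracking the $T^T T$, $T T^T$, and $I$ contributions through the Schur complement while using the tightest scalar bounds via $\sigma_{\min}(T)$ and $\sigma_{\max}(T)$ so that the final two conditions match~\eqref{eq.rho}--\eqref{eq.beta} exactly rather than weaker surrogates. Exploiting that $T^T T$ and $T T^T$ share the same nonzero singular values is the key trick for swapping between them when bounding, and is precisely what produces $\sigma_{\min}^2(T)$ in the numerator of $\rho_0(\mu)$ versus $\sigma_{\max}^2(T)$ inside $\beta(\mu)$.
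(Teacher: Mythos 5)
Your plan follows the paper's proof essentially step for step: the same multipliers $\lambda_1=\alpha/\hat{L}_1$ and $\lambda_2=(\alpha/\mu)(1+m_f/\mu)$, the same observation that the only surviving off-diagonal block of $-(A^TP+PA+2\rho P)-M$ is $-(2\rho\alpha/\mu)\,T$, the same Schur complement with the $(2,2)$ block as pivot, and the same pair of scalar conditions targeting \eqref{eq.rho} and \eqref{eq.beta}. One concrete caution on the cross term, which is exactly the spot you flag as the hard part: if you literally bound the $(2,2)$ block from below by its smallest eigenvalue (a multiple of $\sigma_{\min}^2(T)$ times the identity), then $R_0^T R_2^{-1} R_0$ with $R_0=-(2\rho\alpha/\mu)T$ picks up an extra factor $\sigma_{\max}^2(T)/\sigma_{\min}^2(T)$ and you land on a strictly weaker surrogate of \eqref{eq.beta}. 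The paper avoids this by keeping the \emph{matrix} bound $R_2\succeq \alpha\, TT^T/(2\mu)$ (obtained by splitting off $3\alpha TT^T/(2\mu)$, using $\mu\geq L_f-m_f$, and evaluating at $\rho=\rho_0(\mu)$) and then noting that $T^T(TT^T)^{-1}T$ is an orthogonal projection, hence $\preceq I$; this is the precise form of the ``$T^TT$ versus $TT^T$'' swap you allude to, and it is what produces the clean $8\rho_0^2/\mu$ contribution in \eqref{eq.beta}. You should also add the short argument that the set in \eqref{eq.mu} is nonempty---the paper observes that $\beta(\mu)$ decreases to zero on $[\sigma_{\max}(T),\infty)$ and treats the two cases $\beta(\sigma_{\max}(T))>2m_f$ and $\beta(\sigma_{\max}(T))\leq 2m_f$---since otherwise the hypothesis $\mu>\hat{\mu}$ could be vacuous.
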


\vsp
	
\begin{proof}
See Appendix~\ref{sec.main-proof}. 
\end{proof}

\section{Computational experiments}\label{sec.comp}

We next provide an example to demonstrate the merits of our approach. Let us consider optimization problem~\eqref{eq.opt} with, \begin{equation}\label{eq.qpcp}
\begin{array}{rcl}
f(x) & \!\! = \!\! & \tfrac{1}{2} \, x^T Qx \; + \; q^T x
\\[0.15cm]
	g(z) 
	& \!\! = \!\! & 
 	\left\{
	\ba{rl}
	0, & z \, \leq \, b
	\\[0.1cm]
	\infty, & \mbox{otherwise} 
	\ea
	\right.
	\end{array}
	\end{equation}
where $x$ and $q$ are the $n$-dimensional vectors, $Q\in\mathbb{R}^{n\times n}$ is a positive definite matrix, $T\in\mathbb{R}^{m\times n}$ if a full row rank matrix, and $b \in \mathbb{R}^m$ is a given vector. The gradient of the Moreau envelope is determined by $\nabla M_{\mu g}(v_i) = \max \, (0, (v_i-b_i)/\mu)$ and ($L_f,m_f$) are the largest and the smallest eigenvalues of the matrix $Q$, respectively. 

\vsp
We use Matlab ODE solver $\mathsf{ode45}$ to simulate the primal-dual gradient flow dynamics~\eqref{eq.ahu} and set $n=m=10$, $q=10\times \mathsf{randn}(n,1)$, and $Q = HH^T + K$, where $H = \mathsf{randn}(n,n)$ and $K = \mathsf{diag(exp(randn(n,1)))}$. We choose $b$ to be a vector of all ones, set $T=I$, and report results for $(L_f,m_f) = (1.24,1.03)$ and $(L_f,m_f)=(27.81,1.03)$.

\vsp
Figure~\ref{fig.well} demonstrates the exponential convergence of dynamics~\eqref{eq.ahu} with $(L_f,m_f) = (1.24,1.03)$ for different values of $\mu$. We note that the convergence rate decreases when $\mu$ becomes larger than $2$. For a given value of $\mu$ that satisfies Proposition~\ref{thm.main2}, we use formula~\eqref{eq.rho} to estimate the lower bound on the convergence rate $\rho_0$. We compare our estimate with~\cite[Theorem~2]{quli19} and~\cite[Theorem~6]{dinjovACC19}. As shown in Fig.~\ref{fig.well.rho}, Proposition~\ref{thm.main2} provides a less conservative estimate of the convergence rate than the existing methods. As Figs.~\ref{fig.ill} and~\ref{fig.ill.rho} illustrates, similar observations can be made for a larger condition number, $(L_f,m_f)=(27.81,1.03)$. Clearly, the increase in condition number reduces the rate of exponential decay and our estimates are less conservative than those provided in the literature. 

\begin{figure}{}
	\centering
	\captionsetup[subfigure]{position=top}
	\begin{tabular}{ccc}
		\begin{tabular}{c}
		\rotatebox{90}{$\|w(t) \, - \, w^\star\|$ }
		\end{tabular}
		& 
		\hspace*{-0.75cm}
		\begin{tabular}{c}
			\subfloat[$(L_f,m_f) = (1.24,1.03)$]{\label{fig.well} \includegraphics[width=0.375\textwidth]{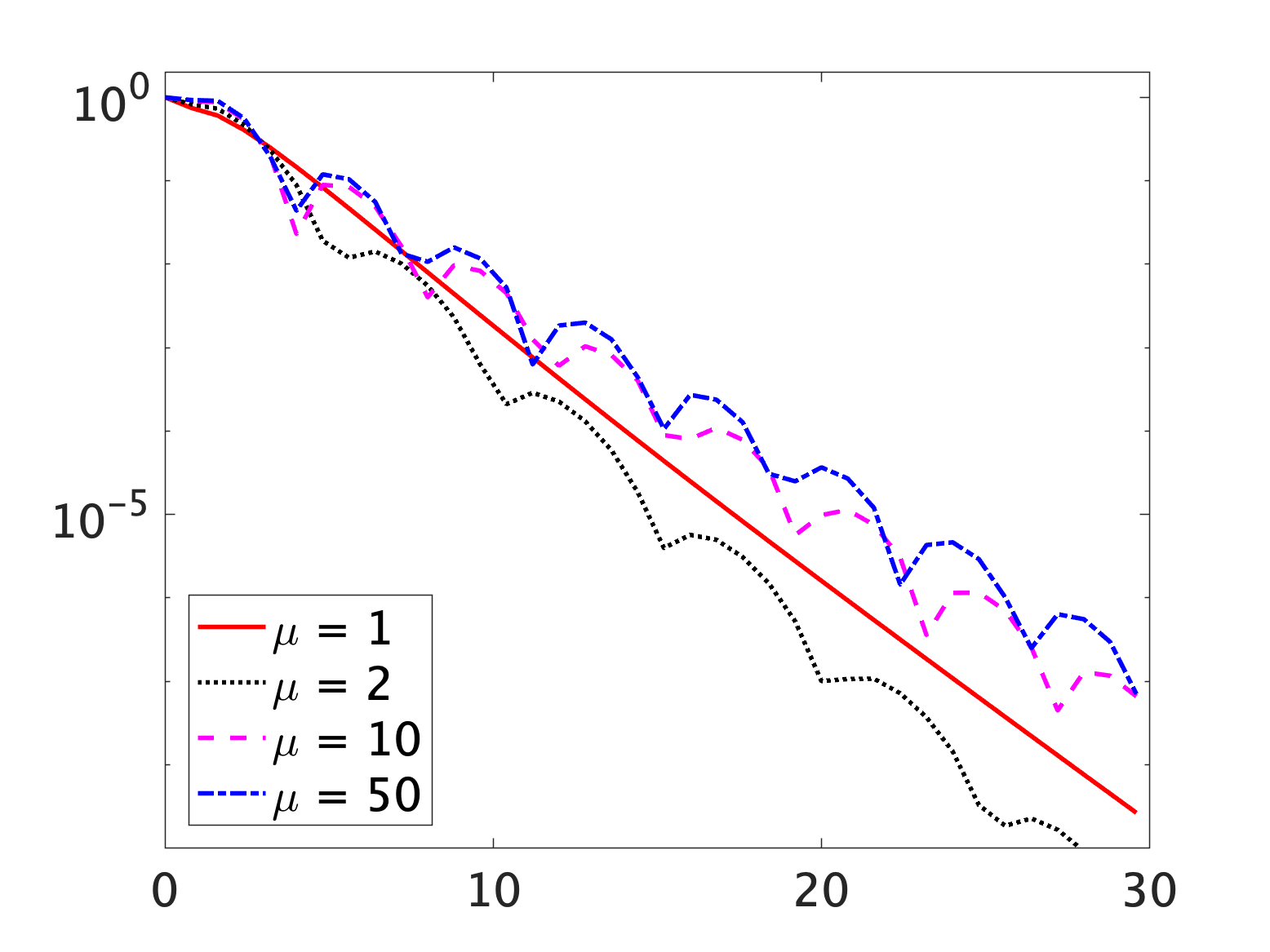}}
		\end{tabular}
		\\
		\begin{tabular}{c}
		\end{tabular} 
		& 
		\begin{tabular}{c}
			{time (seconds)} 
		\end{tabular}
		\\
		\begin{tabular}{c}
		\rotatebox{90}{$\|w(t) \, - \, w^\star\|$ }
		\end{tabular}
		&
		\hspace*{-0.75cm}
		\begin{tabular}{c}
			\subfloat[$(L_f,m_f)=(27.81,1.03)$]{\label{fig.ill} \includegraphics[width=0.375\textwidth]{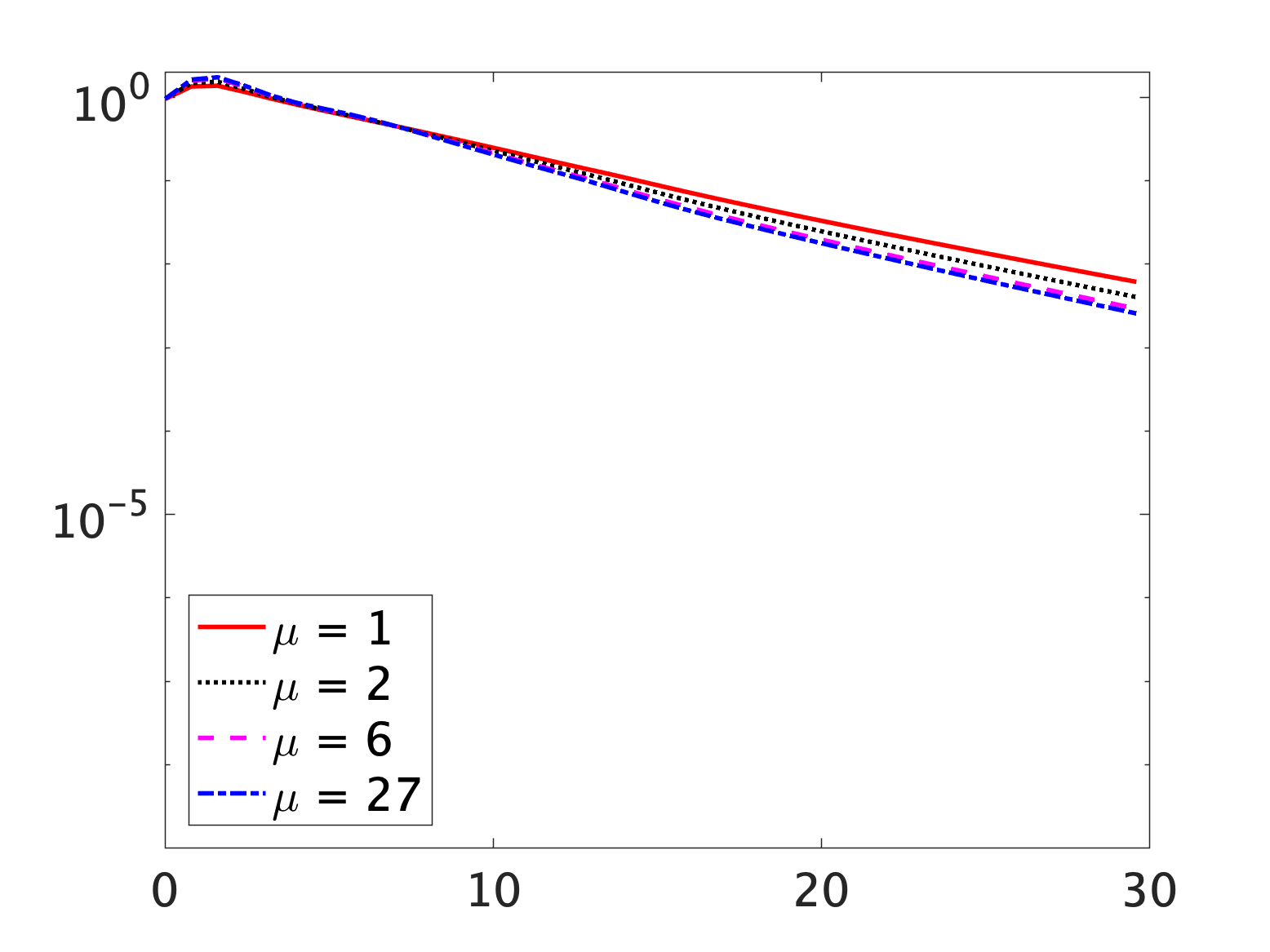}}
		\end{tabular}
		\\
		\begin{tabular}{c}
		\end{tabular} 
		& 
		\begin{tabular}{c}
			{time (seconds)} 
		\end{tabular}
	\end{tabular}
	\caption{Convergence of the primal-dual gradient flow dynamics~\eqref{eq.ahu} for problem~\eqref{eq.qpcp} with (a) $(L_f,m_f) = (1.24,1.03)$ and (b) $(L_f,m_f)=(27.81,1.03)$.}
	\label{AHU_EDP}
\end{figure}

\begin{figure}{}
	\centering
	\captionsetup[subfigure]{position=top}
	\begin{tabular}{ccc}
		\begin{tabular}{c}
		\rotatebox{90}{$\rho$ }
		\end{tabular}
		& 
		\hspace*{-0.75cm}
		\begin{tabular}{c}
				\subfloat[$(L_f,m_f) = (1.24,1.03)$]{\label{fig.well.rho} 			
				\includegraphics[width=0.375\textwidth]{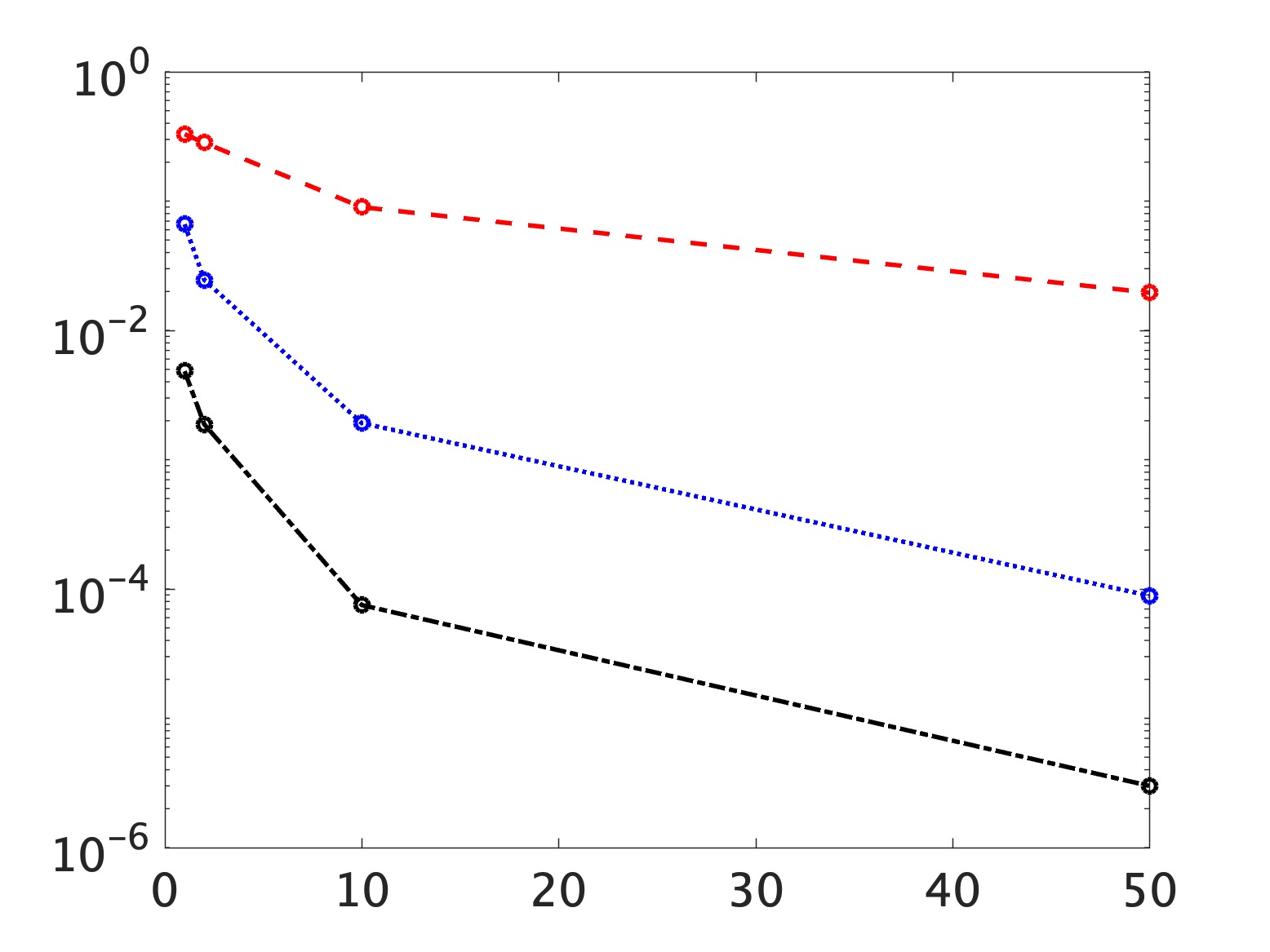}}
		\end{tabular}
		\\
		\begin{tabular}{c}
		\end{tabular} 
		& 
		\begin{tabular}{c}
			{$\mu$} 
		\end{tabular}
		\\
		\begin{tabular}{c}
		\rotatebox{90}{$\rho$ }
		\end{tabular}
		&
				\hspace*{-0.75cm}
		\begin{tabular}{c}
			\subfloat[$(L_f,m_f)=(27.81,1.03)$]{\label{fig.ill.rho} 
			\includegraphics[width=0.375\textwidth]{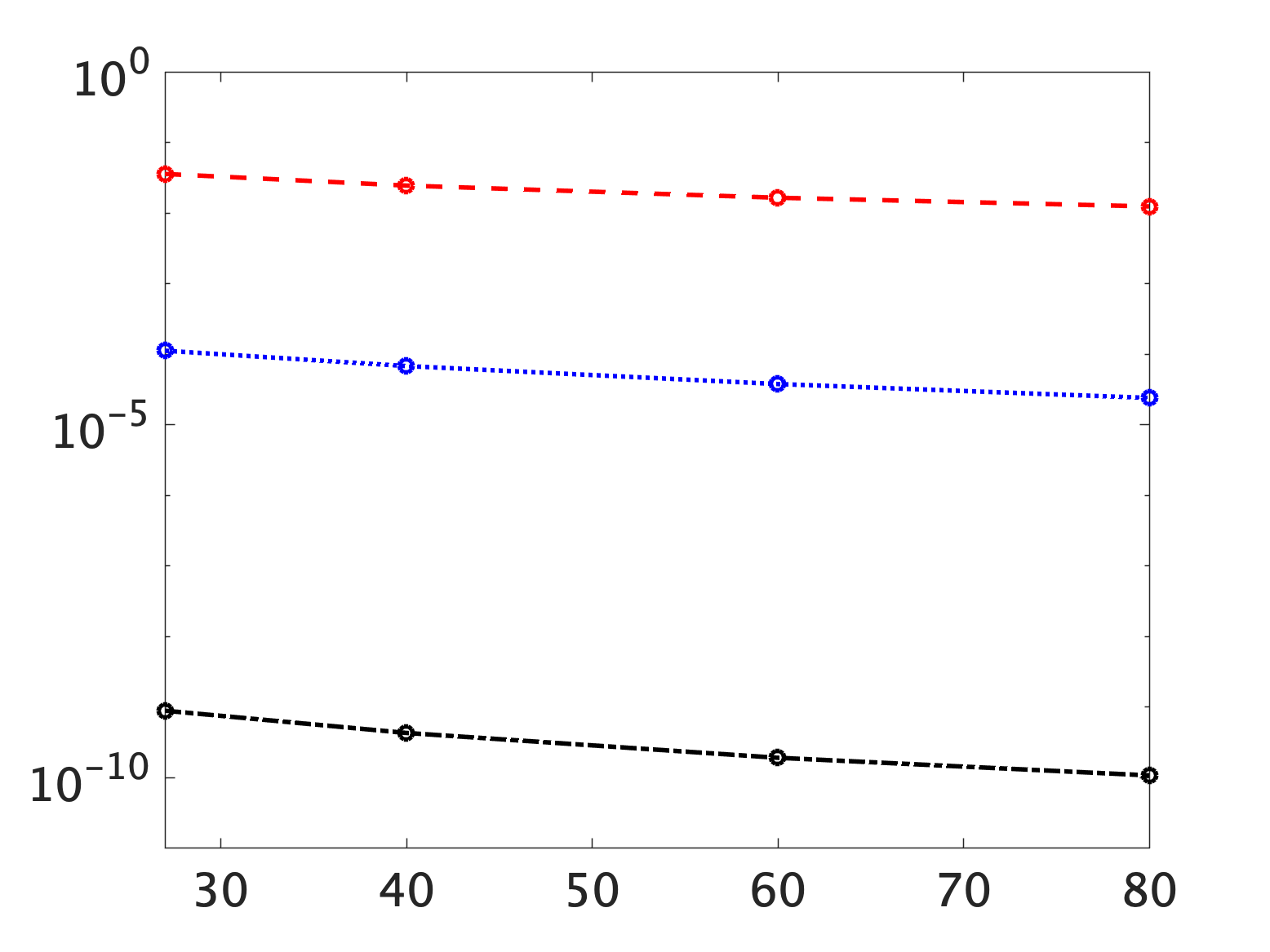}}
		\end{tabular}
		\\
		\begin{tabular}{c}
		\end{tabular} 
		& 
		\begin{tabular}{c}
			{$\mu$} 
		\end{tabular}
	\end{tabular}
	\caption{Convergence rate estimates, as a function of $\mu$, resulting from~\eqref{eq.rho}~(\textbf{\color{red}-- \!--}),~\cite[Theorem~6]{dinjovACC19} (\textbf{\color{blue}$\bm\cdot$$\bm\cdot$$\bm\cdot$}), and~\cite[Theorem~2]{quli19} (\textbf{-- \!-}) for problem~\eqref{eq.opt} with~\eqref{eq.qpcp} and (a) $(L_f,m_f) = (1.24,1.03)$;~(b) $(L_f,m_f)=(27.81,1.03)$. }
	\label{AHU_EDPT}
\end{figure}

\section{Concluding remarks}\label{sec.con}

In this paper, we use a Lyapunov-based approach to establish global exponential stability of the primal-dual gradient flow dynamics resulting from the proximal augmented Lagrangian framework for nonsmooth composite optimization. We provide a worst-case estimate of the exponential decay rate when the differentiable part of the objective function is strongly convex and its gradient is Lipschitz continuous. For a quadratic programming problem, computational experiments are used to show that our estimate of the convergence rate is less conservative compared to the existing literature. Our ongoing work focuses on identifying a quadratic Lyapunov function that can certify the global exponential stability of a second-order primal-dual method for nonsmooth composite optimization~\cite{dhikhojovTAC17}.

	\vspace*{-1ex}
\appendix

	\vspace*{-1ex}
\subsection{Proof of Proposition~\ref{thm.main2}}
	\label{sec.main-proof}
	
	We show that~\eqref{eq.key} holds for $\rho=\rho_0(\mu)$. Substitution of the expressions for $A^T P+PA$ and $\tfrac{1}{2}(PB+C^T \Pi_0) \Lambda^{-1} (B^TP+\Pi_0 C)$ given by~\eqref{eq.PAAP} and~\eqref{eq.M} into~\eqref{eq.key} yields 
\be
R \,=\,
\left[
\begin{array}{cc}
	R_1 & R_0^T
	\\
	R_0 & R_2
\end{array}
\right] \,\succ\, 0
\label{eq.Q}
\ee 
where
\be
\ba{rcl}
	R_1 &\!\!=\!\!& 2\alpha(m_f-\rho) \,I\, -\, \frac{(\alpha-\lambda_1 \hat{L}_1)^2}{2 \lambda_1 } \,I\, -\, \frac{\lambda_2}{2} \,T^T T
	\\[0.15cm]
	R_2 &\!\!=\!\!& (\frac{2\alpha}{\mu}-\frac{\alpha^2}{2\lambda_1\mu^2})\,TT^T \,-\, 2\rho\alpha ((1+\frac{m_f}{\mu})\,I+\frac{1}{\mu^2} \,TT^T) 
	\\[0.15cm]
	&& \,-\, \frac{1}{2\lambda_2} ( \mu\lambda_2- \alpha(1+\frac{m_f}{\mu}) )^2 \,I
	\\[0.15cm]
	R_0 &\!\!=\!\!& \,-\,\frac{\alpha}{\mu} ( \frac{\alpha-\lambda_1\hat{L}_1}{2\lambda_1}+ 2\rho ) \,T \,-\,  \frac{1}{2}( \mu \lambda_2 - \alpha (1+\frac{m_f}{\mu}) )\,T.
\ea
\non
\ee
Here, $\hat{L}_1 \DefinedAs L_f-m_f>0$, and $\alpha$, $\lambda_1$, $\lambda_2$, and $\mu$ are positive parameters that have to be selected such that~\eqref{eq.key} holds for $\rho=\rho_0(\mu)$. We set $\lambda_1 \DefinedAs \alpha/\hat{L}_1$, $\lambda_2 \DefinedAs \alpha(1+m_f/\mu)/\mu$, and add/subtract $3\alpha TT^T/ (2\mu)$ to $R_2$ to obtain 
	\be
	\ba{rcl}
	R_2 & \!\!=\!\! & \alpha(\tfrac{2}{\mu}-\tfrac{\hat{L}_1}{2\mu^2})\,TT^T \,-\, \tfrac{3\alpha}{2\mu} \,TT^T  ~+
	\\[0.15cm]
	& \!\! \!\! &
	\tfrac{\alpha}{\mu} \,TT^T\,-\,2\rho\alpha ((1+\tfrac{m_f}{\mu})\,I+\tfrac{1}{\mu^2} \,TT^T) \,+\, \tfrac{\alpha}{2\mu}\,TT^T.
	\ea
	\non
	\ee
	If $\mu\geq \hat{L}_1$, then
	\be
	\alpha(\tfrac{2}{\mu}-\tfrac{\hat{L}_1}{2\mu^2})\,TT^T\, -\, \tfrac{3\alpha}{2\mu} \,TT^T \;\succeq\;0.
	\label{eq.lgeq1}
	\ee
Furthermore, for $\rho=\rho_0$, we have
	\be
	\ba{l}
	\!\!\!\!
	\tfrac{\alpha}{\mu}\, TT^T\,-\,2\rho\alpha((1+\tfrac{m_f}{\mu})\,I+\tfrac{1}{\mu^2}\, TT^T) 
	~ \succeq
	\\[0.15cm]
	\!\!\!\!
	\tfrac{\alpha}{\mu} \, \sigma_{\min}^2(T) \,I \,-\,2\rho \alpha(1+\tfrac{m_f}{\mu}+\tfrac{\sigma_{\max}^2(T)}{\mu^2} )\, I
	\, = \, 0.
	\ea
	\label{eq.lgeq2}
	\ee
Combining~\eqref{eq.lgeq1} and~\eqref{eq.lgeq2} with the definition of $R_2$ yields $R_2\succeq\alpha TT^T/(2\mu)$ and the positive definiteness of $R_2$ follows from the fact that $T$ is a full row rank matrix.
	
	 The application of the Schur complement requires $R_1 - R_0^T R_2^{-1} R_0 \succ 0$. Using $R_2\succeq\alpha TT^T/(2\mu)$, we can rewrite this condition as 
	$
	R_1 - ( 2\mu/\alpha) R_0^T (TT^T)^{-1} R_0 \succ0.
	$
For $\beta(\mu)$ and $\hat{\mu}$ given by~\eqref{eq.beta} and~\eqref{eq.mu}, respectively, if $\mu> \hat{\mu}$, we have
	\[
	\begin{array}{rcl}
	&&\!\!\!\!\!\!\!\!\!\!\!\!\!\!\!\!\!\!\!\!\!\!\!\!\! R_1 - (2\mu/\alpha) \,R_0^T\, (TT^T)^{-1} \,R_0
	\\[0.1cm]
	& \!\!\succeq\!\! & \alpha \,\big(2m_f  -( 2\rho_0+ \frac{\sigma_{\max}^2(T)}{2\mu} (1+\frac{m_f}{\mu}) +\frac{8\rho_0^2}{\mu} )\big) \,I
	\\[0.1cm]
	& \!\!=\!\! &  \alpha\, \big(2m_f  - \beta(\mu)\big) \,I
	\; \succ \; 0.
	\end{array}
	\]
We now prove the existence of such $\hat{\mu}$. Since $\rho_0(\mu)$ is monotonically decreasing for $\mu\geq \sigma_{\max}(T)$, $\beta(\mu)$ monotonically decreases to zero on the interval $\mu \in [\sigma_{\max}(T),\infty)$. There are two cases: (i) if $\beta(\sigma_{\max}(T)) > 2 m_f$, then $\beta(\bar{\mu}) = 2m_f$ for some $\bar{\mu} \in [\sigma_{\max}(T),\infty)$ and $\hat{\mu} = \bar{\mu}$; (ii) if $\beta(\sigma_{\max}(T)) \leq 2 m_f$, then $\beta(\mu) < \beta(\sigma_{\max (T)}) \leq 2 m_f$ for all $\mu \in (\sigma_{\max (T)}, \infty)$. Thus $\hat{\mu} = \sigma_{\max (T)}$. Therefore, the set in~\eqref{eq.mu} is nonempty and such $\hat{\mu}$ always exists.



\begin{thebibliography}{10}
\providecommand{\url}[1]{#1}
\csname url@samestyle\endcsname
\providecommand{\newblock}{\relax}
\providecommand{\bibinfo}[2]{#2}
\providecommand{\BIBentrySTDinterwordspacing}{\spaceskip=0pt\relax}
\providecommand{\BIBentryALTinterwordstretchfactor}{4}
\providecommand{\BIBentryALTinterwordspacing}{\spaceskip=\fontdimen2\font plus
\BIBentryALTinterwordstretchfactor\fontdimen3\font minus
  \fontdimen4\font\relax}
\providecommand{\BIBforeignlanguage}[2]{{%
\expandafter\ifx\csname l@#1\endcsname\relax
\typeout{** WARNING: IEEEtran.bst: No hyphenation pattern has been}%
\typeout{** loaded for the language `#1'. Using the pattern for}%
\typeout{** the default language instead.}%
\else
\language=\csname l@#1\endcsname
\fi
#2}}
\providecommand{\BIBdecl}{\relax}
\BIBdecl

\bibitem{feipag10}
D.~Feijer and F.~Paganini, ``Stability of primal-dual gradient dynamics and
  applications to network optimization,'' \emph{Automatica}, vol.~46, no.~12,
  pp. 1974--1981, 2010.

\bibitem{dinjovACC18}
D.~Ding and M.~R. Jovanovi\'c, ``A primal-dual {L}aplacian gradient flow
  dynamics for distributed resource allocation problems,'' in \emph{Proceedings
  of the 2018 American Control Conference}, 2018, pp. 5316--5320.

\bibitem{wannic11}
J.~Wang and N.~Elia, ``A control perspective for centralized and distributed
  convex optimization,'' in \emph{Proceedings of the 50th IEEE Conference on
  Decision and Control}, 2011, pp. 3800--3805.

\bibitem{coldalber19}
M.~Colombino, E.~Dall'Anese, and A.~Bernstein, ``Online optimization as a
  feedback controller: stability and tracking,'' \emph{{IEEE} Trans. Control
  Netw. Syst.}, 2019, doi:10.1109/TCNS.2019.2906916.

\bibitem{arrhuruza58}
K.~J. Arrow, L.~Hurwicz, and H.~Uzawa, \emph{Studies in linear and non-linear
  programming}.\hskip 1em plus 0.5em minus 0.4em\relax Stanford University
  Press, 1958.

\bibitem{palchi06}
A.~Cherukuri, B.~Gharesifard, and J.~Cort\'es, ``Saddle-point dynamics:
  conditions for asymptotic stability of saddle points,'' \emph{SIAM J.\
  Control Optim.}, vol.~55, no.~1, pp. 486--511, 2017.

\bibitem{chemalcor16}
A.~{Cherukuri}, E.~{Mallada}, and J.~{Cort{\'e}s}, ``Asymptotic convergence of
  constrained primal-dual dynamics,'' \emph{Syst. Control Lett.}, vol.~87, pp.
  10--15, 2016.

\bibitem{chemalcor18}
A.~Cherukuri, E.~Mallada, S.~Low, and J.~Cort{\'e}s, ``The role of convexity on
  saddle-point dynamics: {L}yapunov function and robustness,'' \emph{IEEE
  Trans.\ Automat.\ Control}, vol.~63, no.~8, pp. 2449--2464, 2018.

\bibitem{polshc17}
B.~Polyak and P.~Shcherbakov, ``Lyapunov functions: An optimization theory
  perspective,'' \emph{IFAC-PapersOnLine}, vol.~50, no.~1, pp. 7456--7461,
  2017.

\bibitem{dhikhojovTAC19}
N.~K. Dhingra, S.~Z. Khong, and M.~R. Jovanovi\'c, ``The proximal augmented
  {L}agrangian method for nonsmooth composite optimization,'' \emph{IEEE Trans.
  Automat. Control}, vol.~64, no.~7, pp. 2861--2868, 2019.

\bibitem{quli19}
G.~Qu and N.~Li, ``On the exponential stability of primal-dual gradient
  dynamics,'' \emph{IEEE Control Syst.\ Lett.}, vol.~3, no.~1, pp. 43--48,
  2019.

\bibitem{tanquli19}
Y.~Tang, G.~Qu, and N.~Li, ``Semi-global exponential stability of primal-dual
  gradient dynamics for constrained convex optimization,'' 2019,
  arXiv:1903.09580.

\bibitem{cheli19}
X.~Chen and N.~Li, ``Exponential stability of primal-dual gradient dynamics
  with non-strong convexity,'' 2019, arXiv:1905.00298.

\bibitem{mogjovAUT19}
S.~Hassan-Moghaddam and M.~R. Jovanovi\'c, ``Proximal gradient flow and
  {D}ouglas-{R}achford splitting dynamics: global exponential stability via
  integral quadratic constraints,'' \emph{Automatica}, 2019, submitted; also
  arXiv:1908.09043.

\bibitem{niecor16}
J.~Cort{\'e}s and S.~K. Niederl{\"a}nder, ``Distributed coordination for
  nonsmooth convex optimization via saddle-point dynamics,'' \emph{J. Nonlinear
  Sci.}, pp. 1--26, 2018.

\bibitem{parboy14}
N.~Parikh and S.~Boyd, ``Proximal algorithms,'' \emph{Foundations and Trends in
  Optimization}, vol.~1, no.~3, pp. 127--239, 2014.

\bibitem{zouhas05}
H.~Zou and T.~Hastie, ``Regularization and variable selection via the elastic
  net,'' \emph{J.\ R.\ Stat.\ Soc.\ B}, vol.~67, no.~2, pp. 301--320, 2005.

\bibitem{dinjovACC19}
D.~Ding and M.~R. Jovanovi\'c, ``Global exponential stability of primal-dual
  gradient flow dynamics based on the proximal augmented {L}agrangian,'' in
  \emph{Proceedings of the 2019 American Control Conference}, 2019, pp.
  3414--3419.

\bibitem{lesrecpac16}
L.~Lessard, B.~Recht, and A.~Packard, ``Analysis and design of optimization
  algorithms via integral quadratic constraints,'' \emph{SIAM J.\ Optim.},
  vol.~26, no.~1, pp. 57--95, 2016.

\bibitem{dhikhojovTAC17}
N.~K. Dhingra, S.~Z. Khong, and M.~R. Jovanovi\'c, ``A second order primal-dual
  method for nonsmooth convex composite optimization,'' \emph{IEEE Trans.
  Automat. Control}, 2017, conditionally accepted; also arXiv:1709.01610.

\end{thebibliography}
\end{document}